\numberwithin{equation}{section}
\newtheorem{thm}{Theorem}[section]
\begin{document}
\setcounter{page}{1}
\title[New approach to generalized Mittag-Leffler function]
{New approach to generalized Mittag-Leffler function via quantum calculus}
\author[R. Nadeem, M. Saif, T. Usman and A. H. Khan]{Raghib Nadeem, Mohd. Saif, Talha Usman$^{*}$ and Abdul Hakim Khan}
\address[Raghib Nadeem, Mohd. Saif and Abdul Hakim Khan]{Department of Applied Mathematics,
Faculty of Engineering and Technology, Aligarh Muslim University,
Aligarh--202002, India.\newline}

\address[ Talha Usman$^{*}$]{Department of Mathematics, School of Basic and Applied
Sciences, Lingaya's Vidyapeeth, Faridabad-121001, Haryana,
India.\newline}

\email{{raghibmaths2015@gmail.com; usmanisaif153@gmail.com;
\newline \qquad\qquad $^{*}$talhausman.maths@gmail.com; ahkhan.amu@gmail.com}}

\subjclass[2010]{33D05, 33E12}

\keywords{$q$-gamma functions, $q$-beta functions, Mittag-Leffler function}

\begin{abstract}
We aim to introduce a new extension of Mittag-Leffler function via $q$-analogue and obtained their significant properties including integral representation, $q$-differentiation, $q$-Laplace transform, image formula under $q$-derivative operators. We also consider some particular cases to give the applications of our main results.

\end{abstract}
\maketitle
\section{Overture}
The Swedish mathematician G\"{o}sta  Mittag-Leffler discovered a special function in 1903 (see, \cite{Gml,Gml1}) defined as

\begin{equation}\label{eq4.1}
E_{\eta}(u) = \sum\limits_{m=0}^{\infty}\frac{u^{m}}{\Gamma(\eta m + 1) \ m!},~~~  (\eta, u \in \mathbb{C} ; \Re(\eta)>0),
\end{equation}
where $\Gamma(.)$ is a classical gamma function \cite{Red}. The special function defined in (\ref{eq4.1}) is called Mittag Leffler function (MLf).

For the very first time, in 1905, A. Wiman \cite{Awim} firstly proposed the generalization of the MLf $E_{\eta}(u)$ as
\begin{equation}\label{eq4.2} E_{\eta,\kappa}(u) =
 \sum\limits_{m = 0}^{\infty} \frac{u^{m}}{\Gamma(\eta m + \kappa) \ m!},\quad (\eta, \kappa \in \mathbb{C}; ~\Re(\eta)> 0, \Re(\kappa)> 0).
\end{equation}

Subsequently, the generalized form of series (\ref{eq4.1}) and (\ref{eq4.2}) studied by Prabhakar \cite{Trp} in 1971  as:

\begin{equation}\label{eq4.3}
E_{\eta,\kappa}^{\sigma}(u) = \sum\limits_{m=0}^{\infty}\frac{u^{m} (\sigma)_{m}}{\Gamma(\eta m + \kappa) \ m!},~~~  (\eta, \kappa, \sigma\in \mathbb{C};~\Re(\eta)>0, \Re(\kappa)>0, \Re(\sigma)>0),
\end{equation}
where $(\sigma)_{m} = \frac{\Gamma(\sigma + m )}{\Gamma(\sigma)}$ denotes the Pochhammer symbol \cite{Red}.

The Mittag-Leffler function plays a vital role in the solution of fractional order differential equations and  fractional order integral equations. It has recently become a subject of rich interest in the field of fractional calculus and its applications and  nowadays some mathematicians consider to refer the classical Mittag-Leffler function as the \textit{Queen Function} in the Fractional Calculus. An enormous amount of research in the theory of Mittag-Leffler functions has been published in the literature. For detailed account of the various generalizations, properties and applications of the MLf  readers may refer to the literatures \cite{Pms,Kss,Pi,Sks,Sal,Pi1,Mgr,Kaa}.

The $q$-calculus is the $q$-extension of the ordinary calculus. The theory of $q$-calculus operators in recent past have been applied in the areas of ordinary fractional calculus, optimal control problem, in finding solutions of the $q$-difference and $q$-integral equations and $q$-transform analysis.

In 2009, Mansoor \cite{Man} has proposed a new form of $q$-analogue of the Mittag-Leffler function is given as:

\begin{equation}\label{eq4.6} e_{\eta,\kappa}(u;q) =
\sum\limits_{m = 0}^{\infty} \frac{u^{m}}{\Gamma_{q}(\eta m + \kappa)} \ (|z|<(1-q)^{-\alpha}),
\end{equation}
where $ \eta > 0 ,\kappa \in \mathbb{C}$.

Recently, Sharma and Jain \cite{Skr} introduced the $q$-analogue of generalized MLf as given underneath:

\begin{equation}\label{eq4.7}
E_{\eta,\kappa}^{\sigma}(u;q) = \sum\limits_{m=0}^{\infty}\frac{(q^{\sigma};q)_{m}}{(q;q)_{m}}
\frac{u^{m}}{\Gamma_{q}(\eta m + \kappa)},
\end{equation}
\begin{equation*}
(\eta, \kappa, \sigma\in \mathbb{C};~\Re(\eta)>0, \Re(\kappa)>0, \Re(\sigma)>0 , |q|<1 ).
\end{equation*}

\section{Prelude }
In the theory of $q$-series (see\cite{Gasp}), for complex $\lambda$ and $0<q<1$, the $q$-shifted factorial is defined as follows:
\begin{equation} \label{eq4.8}
(\lambda;q)_{m} =
\begin{cases}
 1 & ; m=0,\\
 (1 - \lambda) (1 - \lambda q) \ldots (1 - \lambda q^{m - 1}) & ; m\in \mathbb{N}
\end{cases}
\end{equation}
which is equivalent to
\begin{equation}\label{eq4.9}
(\lambda;q)_{m} = \frac{(\lambda;q)_{\infty}}{(\lambda q^{m};q)_{\infty}}
\end{equation}
and its extension naturally as:
\begin{equation}\label{4.10}
(\lambda;q)_{\eta} = \frac{(\lambda;q)_{\infty}}{(\lambda q^{\eta};q)_{\infty}},~~ \eta\in \mathbb{C},
\end{equation}
where the principal value of $q^{\eta}$ is taken.
\vspace{.1cm}

For $s,t\in\mathbb{R}$ the $q$-analogue of the exponent $(s-t)^{m}$ is
\begin{equation}\label{eq4.12}
(s-t)^{(m)} =
\begin{cases}
1 & ; m = 0 \\
 \prod\limits_{i = 0}^{m-1} (s-tq^{i}) &  ;m\neq 0
\end{cases}
\end{equation}
and connected by the following relationship
$$(s-t)^{(m)} = s^{m}(t/s;q)_{m}, ~~~~(s\neq0).$$

Obviously, its expansion for $\tau \in \mathbb{R}$ as
\begin{equation}\label{eq4.13}
(s-t)^{(m)} = s^{m}\frac{(t/s;q)_{\infty}}{(q^{\tau}t/s;q)_{\infty}},~~~~\
(s;q)_{\tau}=\frac{(s;q)_{\infty}}{(s q^{\tau};q)_{\infty}}.
\end{equation}

Note that
$$(s-t)^{(\tau)} = s^{\tau}(t/s;q)_{\tau}.$$

The $q$-analogue of binomial coefficient is defined for $s,t >0$ as
\begin{equation}\label{eq4.14}
{\binom{s}{t}}_{q} = \frac{[s]_{q}}{[t]_{q}[s-t]_{q}}
= \frac{(q;q)_{s}}{(q;q)_{t}(q;q)_{s - t}}
= {\binom{s}{s-t}}_{q}.
\end{equation}

The definition can be generalized in the following way. For arbitrary complex $\tau$ we have
\begin{equation}\label{eq4.15}
{\binom{\tau}{m}}_{q} = \frac{(q^{-\tau};q)_{m}}{(q;q)_{m}}
(-1)^{m} q^{\tau m - \binom{m}{2}}
= \frac{\Gamma_{q}(\tau + 1)}{\Gamma_{q}(m + 1)\Gamma_{q}(\tau - m + 1)}
\end{equation}
where $\Gamma_{q}(u)$ is the q-gamma function.
\vspace{.1cm}

The $q$-gamma and $q$-beta functions(\cite{Gasp}) are defined by
\begin{equation}\label{eq4.16}
\Gamma_{q}(u) = \frac{(q;q)_{\infty}}{(q^{u};q)_{\infty}}(1 - q)^{1-u} = (1 - q)^{(u - 1)} (1 - q)^{1-u},
\end{equation}
for $ u\in \mathbb{R} \setminus \{0,-1,-2,-3,\ldots\}; |q|<1.$
\vspace{.1cm}

 Clearly,
$$\Gamma_{q}(u + 1) = [u]_{q} \Gamma_{q}(u) $$
and
\begin{equation}\label{eq4.17}
B_{q}(\eta, \kappa) =\frac{\Gamma_{q}(\eta)\Gamma_{q}(\kappa)}{\Gamma_{q}(\eta + \kappa)} = \int_{0}^{1} u^{\eta -1} \frac{(qu;q)_{\infty}}{(q^{\kappa}u;q)_{\infty}} d_{q}u
= \int_{0}^{1}u^{\eta - 1} (uq;q)_{\kappa - 1} d_{q}u,
\end{equation}
 $$(\Re(\eta),\Re(\kappa) >0).$$
Further, the $\Gamma_{q}(u)$ satisfies the functional equation
\begin{equation}\label{eq4.18}
\Gamma_{q}(u+1) = \frac{1-q^{u}}{1-q}\Gamma_{q}(u)
\end{equation}

Also, the $q$-difference operator and $q$-integration of a function $f(u)$ defined on a subset of $\mathbb{C}$ are given by \cite{Gasp} respectively.
\begin{equation}\label{eq4.19}
D_{q}f(u) = \frac{f(u) - f(uq)}{u(1 - q)} \ ~(u\neq 0, q\neq 1), (D_{q}f)(0) = \lim_{u\rightarrow 0}(D_{q}f)(u)
\end{equation}
and
\begin{equation}\label{eq4.19I}
\int_{0}^{u} f(t) d(t;q) = u (1 - q) \sum\limits_{m = 0}^{\infty} q^{m} f(u q^{k}).
\end{equation}
\section{Extended $q$-Mittag-Leffler function and their properties}
In this section, we extend the definition (\ref{eq4.7}) by introducing the following relation for
$(q^{c}, q)_{m}$
\begin{equation}\label{eq4.20}
\frac{(q^{c};q)_{m}}{(q^{\sigma};q)_{m}} = \frac{B_{q}(\sigma + m, c - \sigma)}{B_{q}(\sigma, c - \sigma)}.
\end{equation}
Now, we define the extension of generalized Mittag-Leffler function (\ref{eq4.7})  using above relation as:
\begin{equation}\label{eq4.22}
E_{\eta,\kappa}^{(\sigma;c)}(u;q ) = \sum\limits_{m=0}^{\infty} \frac{B_{q}(\sigma + m, c - \sigma)}{B_{q}(\sigma, c - \sigma)}~\frac{(q^{c};q)_{m}}{(q;q)_{m}}~
\frac{u^{m}}{\Gamma_{q}(\eta m + \kappa)}
\end{equation}
$$ (\Re(c)>\Re(\sigma)>0, |q|<1), $$
where $B_{q}(.)$ is the $q$-analog of beta function.
\vspace{.1cm}

We enumerate the relations and particular cases of $q$-analogue of extended generalized Mittag-Leffler function with other special functions as given below

\begin{enumerate}[label=(\roman*)]
\item If we put $c = 1$ in (\ref{eq4.22}), we obtain
\begin{equation}\label{eq4a}
E_{\eta,\kappa}^{(\sigma;1)}(u;q ) = \sum\limits_{m=0}^{\infty}  \frac{(q^{\sigma};q)_{m}}{(q;q)_{m}}~
\frac{u^{m}}{\Gamma_{q}(\eta m + \kappa)} = E_{\eta,\kappa}^{\sigma}(u;q ),
\end{equation}
where the function $E_{\eta,\kappa}^{\sigma}(u;q )$ is the $q$-analogue of Mittag-Leffler function defined in (\ref{eq4.7}) .
\item Again, if we take $\sigma = 1$ in (\ref{eq4.22}), we get
\begin{equation}\label{eq4b}
E_{\eta,\kappa}^{(1;c)}(u;q ) = \sum\limits_{m=0}^{\infty}
\frac{u^{m}}{\Gamma_{q}(\eta m + \kappa)} = e_{\eta,\kappa} (u;q ),
\end{equation}
the function $e_{\eta,\kappa} (u;q )$ can be termed as $q$-analogue of Mittag-Leffler function defined in (\ref{eq4.6}).
\item If we consider $\eta = \kappa = \sigma = 1 $, in (\ref{eq4.22}),
we find
\begin{equation}\label{eq4c}
E_{1,1}^{(1;c)}(u;q ) = \sum\limits_{m=0}^{\infty}  \frac{(q^{c};q)_{m}}{(q;q)_{m}}u^{m}
 =\frac{(q^{c}u;q)_{\infty}}{(q;q)_{\infty}} = {}_1\phi_0(q^{c};-;q,u),
\end{equation}
where the function ${}_2\phi_0(q^{c};-;q,u) = (1 - u)^{-c}$ can be termed as $q$- binomial function.
\item On setting $c = c+\sigma$, in (\ref{eq4.22}), then similarly, we obtain $q$-analogue of Mittag-Leffler function $E_{\eta,\kappa}^{\sigma}(u;q )$ defined in (\ref{eq4.7}).
\end{enumerate}

\section{Convergence of $E_{\eta,\kappa}^{(\sigma;c)}(u;q) $ }
\begin{thm}\label{1a}
 The $q$-analogue of the extended generalized Mittag-Leffler function defined by the summation formula (\ref{eq4.22}) converges absolutely for $|u|<(1 - q)^{-\eta}$ provided that $0 <q<1,\eta>0, \Re(c)>\Re(\sigma), c, \sigma \in \mathbb{C}$.
\end{thm}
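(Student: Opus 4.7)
The plan is to apply the ratio test directly to the series (\ref{eq4.22}). Writing $a_m$ for its $m$-th term, I would compute $\lim_{m\to\infty}|a_{m+1}/a_m|$ by splitting the term into four $m$-dependent blocks: the beta-function ratio $B_q(\sigma+m,c-\sigma)/B_q(\sigma,c-\sigma)$, the $q$-shifted factorial $(q^c;q)_m$, the denominator $(q;q)_m$, and the $q$-gamma $\Gamma_q(\eta m+\kappa)$, together with the power $u^m$.

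Three of these blocks reduce by elementary identities from Section 2. Using $B_q(\eta,\kappa)=\Gamma_q(\eta)\Gamma_q(\kappa)/\Gamma_q(\eta+\kappa)$ together with the functional equation (\ref{eq4.18}), one finds
\begin{equation*}
\frac{B_q(\sigma+m+1,c-\sigma)}{B_q(\sigma+m,c-\sigma)}=\frac{1-q^{\sigma+m}}{1-q^{c+m}},
\end{equation*}
while (\ref{eq4.8}) gives $(q^c;q)_{m+1}/(q^c;q)_m = 1-q^{c+m}$ and $(q;q)_m/(q;q)_{m+1} = 1/(1-q^{m+1})$. The factors $1-q^{c+m}$ cancel against one another, leaving the combined contribution $(1-q^{\sigma+m})/(1-q^{m+1})$, which manifestly tends to $1$ as $m\to\infty$.

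The crux of the argument, and the source of the radius $(1-q)^{-\eta}$, is the asymptotic behaviour of the $q$-gamma ratio $\Gamma_q(\eta m+\kappa)/\Gamma_q(\eta m+\eta+\kappa)$. Substituting the product representation (\ref{eq4.16}) gives
\begin{equation*}
\frac{\Gamma_q(\eta m+\kappa)}{\Gamma_q(\eta m+\eta+\kappa)}=(1-q)^{\eta}\,\frac{(q^{\eta m+\eta+\kappa};q)_\infty}{(q^{\eta m+\kappa};q)_\infty}.
\end{equation*}
Since $\eta>0$ and $0<q<1$, both $q^{\eta m+\kappa}$ and $q^{\eta m+\eta+\kappa}$ decay geometrically to $0$, so each infinite product tends to $1$ and the whole ratio tends to $(1-q)^{\eta}$. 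Collecting all four contributions, $\lim_{m\to\infty}|a_{m+1}/a_m|=(1-q)^{\eta}|u|$, and the ratio test then yields absolute convergence exactly when $(1-q)^{\eta}|u|<1$, i.e.\ on $|u|<(1-q)^{-\eta}$. The only non-routine step is this $q$-gamma estimate; once the geometric decay of $q^{\eta m}$ is used to push the infinite products to $1$, the rest of the verification is bookkeeping.
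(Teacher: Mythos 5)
Your proof is correct and follows essentially the same route as the paper: the ratio test applied to the same four $m$-dependent blocks, with the $q$-gamma ratio supplying the radius of convergence. It is worth noting that your computation is actually the consistent version of the argument: the paper's own displayed limit is $|(1-q)^{-\eta}|\,|u|$, which via the ratio test would give the region $|u|<(1-q)^{\eta}$ rather than the stated $|u|<(1-q)^{-\eta}$, whereas your limit $(1-q)^{\eta}|u|$ (obtained from $\Gamma_q(\eta m+\kappa)/\Gamma_q(\eta m+\eta+\kappa)\to(1-q)^{\eta}$, since the infinite products tend to $1$ as $q^{\eta m}\to 0$) is the correct one and does yield the claimed radius.
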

\begin{proof}
Writing the summation formula (\ref{eq4.22}) as
$ E_{\eta,\kappa}^{(\sigma;c)}(u;q ) = \sum\limits_{m = 0}^{\infty}s_{m}$ and by applying ratio formula, we find
\begin{equation}\nonumber
\lim\limits_{m\rightarrow\infty}\bigg|\frac{s_{m+1}}{s_{m}}\bigg|
= \bigg|\frac{B_{q}(\sigma+m+1,c-\sigma)}{B_{q}(\sigma+m,c-\sigma)}\bigg|\bigg|\frac{(q^{c},q)_{m+1}}{(q^{c},q)_{m}}\bigg|\bigg|\frac{(q,q)_{m}}{(q,q)_{m+1}}\bigg|
\bigg|\frac{\Gamma(\eta m + \kappa)}{\Gamma(\eta m + \eta + \kappa)}~u\bigg|
\end{equation}
\begin{equation}\nonumber
= \lim\limits_{m\rightarrow\infty}\bigg|\frac{(q^{c+m},q)_{\infty}}{(q^{c+m+1},q)_{\infty}}~\frac{(q^{\sigma+m},q)_{\infty}}{(q^{\sigma+m+1},q)_{\infty}}~
\frac{(q^{\eta m+\kappa},q)_{\infty}}{(q^{\eta m+\kappa+\eta},q)_{\infty}}~\frac{(q^{m+1},q)_{\infty}}{(q^{m},q)_{\infty}}~~(1 - q)^{-\eta} u\bigg|
\end{equation}
\begin{equation}\nonumber
 = \lim\limits_{m\rightarrow\infty}\bigg|(1 - q^{c+m})~(1 - q^{\sigma+m})~(1 - q^{\eta m + \kappa})^{\eta}~
\frac{(1 - q)^{-\eta}}{(1 - q^{m})} ~u\bigg|
\end{equation}
\begin{equation}\label{eq4.23}
=|(1 - q)^{-\eta}| |u|, \quad for \quad 0<|q|<1.
\end{equation}
\end{proof}
\section{Recurrence Relations}
\begin{thm}\label{1b}
If $\eta, \kappa, \sigma \in \mathbb{C}$, $\Re(\eta)>0, \Re(\kappa)>0, \Re(\sigma)>0 $ and $\sigma \neq c $, then
\begin{equation}\nonumber
E_{\eta,\kappa}^{(\sigma;c)}(u;q ) = E_{\eta,\kappa}^{(\sigma + 1;c + 1)}(u;q ) - u~q^{c} ~E_{\eta,\eta + \kappa}^{(\sigma+ 1;c + 1)}(u;q ).
\end{equation}
\end{thm}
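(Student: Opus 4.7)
The plan is to expand the right-hand side of the claimed identity directly from the series definition (\ref{eq4.22}) and show, coefficient by coefficient in $u$, that it agrees with $E_{\eta,\kappa}^{(\sigma;c)}(u;q)$. First I would write
\begin{equation*}
E_{\eta,\kappa}^{(\sigma+1;c+1)}(u;q)=\sum_{m=0}^{\infty}\frac{B_{q}(\sigma+1+m,\,c-\sigma)}{B_{q}(\sigma+1,\,c-\sigma)}\,\frac{(q^{c+1};q)_{m}}{(q;q)_{m}}\,\frac{u^{m}}{\Gamma_{q}(\eta m+\kappa)},
\end{equation*}
and similarly expand $uq^{c}\,E_{\eta,\eta+\kappa}^{(\sigma+1;c+1)}(u;q)$. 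The explicit factor $u$ in the second term shifts the index: substituting $m\mapsto m-1$ brings both sums onto a common factor $u^{m}/\Gamma_{q}(\eta m+\kappa)$, at the cost of an index $m\geq 1$ in the second sum.

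Second, I would collect the coefficient of $u^{m}/\Gamma_{q}(\eta m+\kappa)$. The $m=0$ term comes only from the first sum and equals $1$, which matches the $m=0$ contribution to $E_{\eta,\kappa}^{(\sigma;c)}(u;q)$. For $m\geq 1$ the combined coefficient to be simplified is
\begin{equation*}
\frac{B_{q}(\sigma+m+1,c-\sigma)}{B_{q}(\sigma+1,c-\sigma)}\frac{(q^{c+1};q)_{m}}{(q;q)_{m}}\;-\;q^{c}\,\frac{B_{q}(\sigma+m,c-\sigma)}{B_{q}(\sigma+1,c-\sigma)}\frac{(q^{c+1};q)_{m-1}}{(q;q)_{m-1}},
\end{equation*}
and the target value is
$\dfrac{B_{q}(\sigma+m,c-\sigma)}{B_{q}(\sigma,c-\sigma)}\dfrac{(q^{c};q)_{m}}{(q;q)_{m}}$.

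Third, I would verify the resulting algebraic identity by converting beta ratios into $q$-gamma ratios through $B_{q}(a,b)=\Gamma_{q}(a)\Gamma_{q}(b)/\Gamma_{q}(a+b)$ and then applying the functional equation (\ref{eq4.18}), $\Gamma_{q}(u+1)=[(1-q^{u})/(1-q)]\Gamma_{q}(u)$, to absorb the shifts $\sigma\to\sigma+1$ and $c\to c+1$ into explicit $q$-factors. The underlying elementary $q$-Pochhammer splitting
\begin{equation*}
(q^{c};q)_{m}=(q^{c+1};q)_{m}-q^{c}(1-q^{m})(q^{c+1};q)_{m-1}
\end{equation*}
is the mechanism that recombines the two terms into the single coefficient appearing on the left.

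The main obstacle is the bookkeeping in this last step: several shifted $q$-Pochhammer and $q$-gamma quantities appear simultaneously, and it is easy to misplace a unit shift in either $\sigma$ or $c$. Convergence is not an issue since, by Theorem \ref{1a}, each series involved converges absolutely in $|u|<(1-q)^{-\eta}$, so the rearrangement and reindexing of sums are fully justified.
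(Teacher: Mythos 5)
Your plan is structurally the same computation as the paper's, run in the opposite direction: the paper starts from the left-hand side, splits $(1-q^{c})=(1-q^{c+m})-q^{c}(1-q^{m})$ inside $(q^{c};q)_{m}=(1-q^{c})(q^{c+1};q)_{m-1}$, and reindexes the second sum --- exactly your $q$-Pochhammer splitting read backwards. The problem is that the verification you defer to your third step cannot be carried out, because the coefficient identity you write down is false. The splitting correctly converts the $q$-Pochhammer factors $(q^{c};q)_{m}$ into $(q^{c+1};q)_{m}$ and $(q^{c+1};q)_{m-1}$, but it does nothing to the beta-function ratios: both resulting sums still carry $B_{q}(\sigma+m,c-\sigma)/B_{q}(\sigma,c-\sigma)$ (the second becoming $B_{q}(\sigma+m+1,c-\sigma)/B_{q}(\sigma,c-\sigma)$ after reindexing), whereas $E_{\eta,\kappa}^{(\sigma+1;c+1)}$ and $E_{\eta,\eta+\kappa}^{(\sigma+1;c+1)}$ require the normalization $B_{q}(\sigma+1+m,c-\sigma)/B_{q}(\sigma+1,c-\sigma)$. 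The discrepancy factor is $\frac{(1-q^{\sigma+m})(1-q^{c})}{(1-q^{\sigma})(1-q^{c+m})}$, which is not $1$ unless $q^{\sigma}=q^{c}$ --- precisely the case $\sigma=c$ excluded by the hypothesis.

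Concretely, compare the coefficient of $u/\Gamma_{q}(\eta+\kappa)$ on both sides. On the left it is
\begin{equation*}
\frac{B_{q}(\sigma+1,c-\sigma)}{B_{q}(\sigma,c-\sigma)}\,\frac{1-q^{c}}{1-q}
=\frac{1-q^{\sigma}}{1-q^{c}}\cdot\frac{1-q^{c}}{1-q}=\frac{1-q^{\sigma}}{1-q},
\end{equation*}
while your combined right-hand coefficient at $m=1$ is
\begin{equation*}
\frac{B_{q}(\sigma+2,c-\sigma)}{B_{q}(\sigma+1,c-\sigma)}\,\frac{1-q^{c+1}}{1-q}-q^{c}
=\frac{1-q^{\sigma+1}}{1-q}-q^{c},
\end{equation*}
and equality of the two forces $q^{\sigma}(1-q)=q^{c}(1-q)$, i.e.\ $\sigma=c$. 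So the recurrence as stated fails for every admissible parameter choice, and no amount of bookkeeping in your step three will close the gap. (The paper's own proof conceals the same defect: its final display carries the unshifted ratio $B_{q}(\sigma+m,c-\sigma)/B_{q}(\sigma,c-\sigma)$ and the jump to ``the required result'' silently replaces it by $B_{q}(\sigma+m+1,c-\sigma)/B_{q}(\sigma+1,c-\sigma)$.) What the computation actually proves is a recurrence in which only the Pochhammer parameter $c$ is shifted while the beta-ratio normalization is left at $(\sigma;c)$; to state it as a relation among functions $E_{\eta,\kappa}^{(\sigma';c')}$ one must either insert the correction factors above or redefine the shifted functions accordingly.
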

\begin{proof}
By the definition (\ref{eq4.22}), we have
\begin{equation}\nonumber
E_{\eta,\kappa}^{(\sigma;c)}(u;q ) = \sum\limits_{m=0}^{\infty} \frac{B_{q}(\sigma + m, c - \sigma)}{B_{q}(\sigma, c - \sigma)}~\frac{(q^{c};q)_{m}}{(q;q)_{m}}~
\frac{u^{m}}{\Gamma_{q}(\eta m + \kappa)},
\end{equation}
\begin{equation}\nonumber
\qquad  = \frac{1}{\Gamma(\kappa)}+ \sum\limits_{m=1}^{\infty} \frac{B_{q}(\sigma + m, c - \sigma)}{B_{q}(\sigma, c - \sigma)}~\frac{(1-q^{c})(q^{c+1};q)_{m-1}}{(q;q)_{m}}~
\frac{u^{m}}{\Gamma_{q}(\eta m + \kappa)}.
\end{equation}
\qquad Since $(1-q^{c}) = (1-q^{c+m})-q^{c}(1-q^{m})$, the above equation reduces to
\begin{equation}\nonumber
E_{\eta,\kappa}^{(\sigma;c)}(u;q ) = \frac{1}{\Gamma(\kappa)}+ \sum\limits_{m=1}^{\infty} \frac{B_{q}(\sigma + m, c - \sigma)}{B_{q}(\sigma, c - \sigma)}~\frac{(1-q^{c+m})(q^{c+1};q)_{m-1}}{(q;q)_{m}}~
\frac{u^{m}}{\Gamma_{q}(\eta m + \kappa)}-
\end{equation}
\begin{equation}\nonumber
- q^{c}~\sum\limits_{m=1}^{\infty} \frac{B_{q}(\sigma + m, c - \sigma)}{B_{q}(\sigma, c - \sigma)}~\frac{(1-q^{m})(q^{c+1};q)_{m-1}}{(q;q)_{m}}~
\frac{u^{m}}{\Gamma_{q}(\eta m + \kappa)}.
\end{equation}
On replacing $m$ with  $m+1$ in the second summation, it becomes
\begin{equation}\nonumber
E_{\eta,\kappa}^{(\sigma;c)}(u;q ) = \frac{1}{\Gamma(\kappa)}+ \sum\limits_{m=1}^{\infty} \frac{B_{q}(\sigma + m, c - \sigma)}{B_{q}(\sigma, c - \sigma)}~\frac{(q^{c+1};q)_{m}}{(q;q)_{m}}~
\frac{u^{m}}{\Gamma_{q}(\eta m + \kappa)}
\end{equation}
\begin{equation}\nonumber
\qquad- q^{c}~\sum\limits_{m=1}^{\infty} \frac{B_{q}(\sigma + m + 1, c - \sigma)}{B_{q}(\sigma, c - \sigma)}~\frac{ (q^{c+1};q)_{m}}{(q;q)_{m}}~
\frac{u^{m + 1}}{\Gamma_{q}(\eta m + \eta + \kappa)},
\end{equation}
which leads to the required result (\ref{1b}).
\end{proof}
\section{Some elementary properties of Extended $q$-Mittag-Leffler function}
We begin with the underlying theorem, which shows the integral representation of extended $q$-Mittag-Leffler function:
\begin{thm}\label{1c}(Integral representation)
For the extended $q$-Mittag-Leffler function, we have
\begin{equation}\label{eq4.24}
E_{\eta,\kappa}^{(\sigma;c)}(u;q ) =
\frac{1}{B_{q}(\sigma, c-\sigma)}\int_{0}^{1}t^{\sigma - 1}
\frac{(tq;q)_{\infty}}{(tq^{c-\sigma};q)_{\infty}}
E_{\eta,\kappa}^{(c)}(tu;q)~d_{q}t,
\end{equation}
provided that, $\eta, \kappa, \sigma \in \mathbb{C}$, $\Re(\eta)>0, \Re(\kappa)>0, \Re(\sigma)>0 $ and $\sigma \neq c $.
\end{thm}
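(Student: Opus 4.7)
The plan is to work from the right-hand side by inserting the series representation (\ref{eq4.7}) of $E_{\eta,\kappa}^{(c)}(tu;q)$, interchanging the sum and the Jackson $q$-integral, and then evaluating the resulting $t$-integral as a $q$-beta function via (\ref{eq4.17}). The output should be precisely the defining series (\ref{eq4.22}) of $E_{\eta,\kappa}^{(\sigma;c)}(u;q)$.

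Concretely, after writing
\begin{equation}\nonumber
E_{\eta,\kappa}^{(c)}(tu;q) = \sum_{m=0}^{\infty}\frac{(q^{c};q)_{m}}{(q;q)_{m}}\frac{(tu)^{m}}{\Gamma_{q}(\eta m+\kappa)},
\end{equation}
the right-hand side of (\ref{eq4.24}) becomes
\begin{equation}\nonumber
\frac{1}{B_{q}(\sigma,c-\sigma)}\sum_{m=0}^{\infty}\frac{(q^{c};q)_{m}}{(q;q)_{m}}\frac{u^{m}}{\Gamma_{q}(\eta m+\kappa)}\int_{0}^{1}t^{\sigma+m-1}\frac{(tq;q)_{\infty}}{(tq^{c-\sigma};q)_{\infty}}\,d_{q}t.
\end{equation}
Applying (\ref{eq4.17}) with parameters $(\sigma+m,\,c-\sigma)$ in place of $(\eta,\kappa)$ identifies the inner $q$-integral with $B_{q}(\sigma+m,c-\sigma)$, and collecting the ratio $B_{q}(\sigma+m,c-\sigma)/B_{q}(\sigma,c-\sigma)$ reproduces exactly the summand of (\ref{eq4.22}).

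The one nonroutine step is justifying the interchange of the Jackson $q$-integral with the infinite sum. Since $|t|\le 1$ on the interval of integration and $\Re(c-\sigma)>0$, the kernel $t^{\sigma-1}(tq;q)_{\infty}/(tq^{c-\sigma};q)_{\infty}$ remains uniformly bounded on the discrete support of $d_{q}t$, and Theorem \ref{1a} (with $c$ replacing the role of $\sigma$) guarantees absolute convergence of $\sum_{m}\frac{(q^{c};q)_{m}}{(q;q)_{m}}\frac{(tu)^{m}}{\Gamma_{q}(\eta m+\kappa)}$ whenever $|u|<(1-q)^{-\eta}$. The Jackson integral is itself a convergent series in $q^{k}$, so the interchange amounts to rearrangement of an absolutely convergent double series, which is legitimate. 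Once this is in place, the rest is a bookkeeping computation.
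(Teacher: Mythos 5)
Your proposal is correct and is essentially the paper's own argument read in the opposite direction: the paper starts from the series (\ref{eq4.22}), replaces $B_{q}(\sigma+m,c-\sigma)$ by its $q$-integral representation (\ref{eq4.17}), and pulls the sum inside the integral, which is exactly your computation reversed. Your added justification of the sum--integral interchange (absent from the paper) is a welcome extra, though note that the kernel at $t=q^{k}$ need not be uniformly bounded when $\Re(\sigma)<1$; the interchange is more cleanly justified by absolute convergence of the double series after including the factor $q^{k}$ from the Jackson sum.
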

\begin{proof}
By the definition of q-analogue of beta function, we can rewrite equation (\ref{eq4.22})    as follows:
\begin{equation}\nonumber
E_{\eta,\kappa}^{(\sigma;c)}(u;q ) = \sum\limits_{m=0}^{\infty}
\bigg\{ \int_{0}^{1}t^{\sigma+m-1}
\frac{(tq;q)_{\infty}}{(tq^{c-\sigma};q)_{\infty}}d_{q}t \bigg\}
\frac{1}{B_{q}(\sigma, c-\sigma)}
\end{equation}
\begin{equation}\nonumber
\times \frac{(q^{c};q)_{m}}{\Gamma_{q}(\eta m + \kappa)}
\frac{u^{m}}{(q;q)_{m}}
\end{equation}
\begin{equation}\nonumber
= \frac{1}{B_{q}(\sigma, c-\sigma)}\int_{0}^{1}t^{\sigma-1}
\frac{(tq;q)_{\infty}}{(tq^{c-\sigma};q)_{\infty}}d_{q}t~
\frac{(q^{c};q)_{m}}{(q;q)_{m}}
\frac{{tu}^{m}}{\Gamma_{q}(\eta m + \kappa)},
\end{equation}
which leads to the required result (\ref{eq4.24}).
\end{proof}
\begin{thm}
For $\eta, \kappa, \sigma \in \mathbb{C}, \Re(\eta)>0,
\Re(\kappa)>0, \Re(\sigma)>0, c\neq \sigma $, then for any $m \in \mathbb{N}$, we have
\begin{equation}\label{eq4.25}
D_{q}^{m}[u^{\kappa - 1} E_{\eta, \kappa}^{(\sigma;c)}(\lambda u^{\eta};q)] = u^{\kappa - m - 1}  E_{\eta, \kappa - m}^{(\sigma;c)}(\lambda u^{\eta};q).
\end{equation}
\end{thm}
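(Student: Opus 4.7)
The plan is to proceed by induction on $m$, where the base case $m=1$ is obtained by a direct termwise $q$-differentiation of the series defining $E_{\eta,\kappa}^{(\sigma;c)}(\lambda u^{\eta};q)$, and the inductive step follows by reapplying the base case with $\kappa$ shifted to $\kappa-m$.

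For the base case, I would substitute the series definition \eqref{eq4.22} and absorb the factor $u^{\kappa-1}$ into the monomial, writing
\[
u^{\kappa-1} E_{\eta,\kappa}^{(\sigma;c)}(\lambda u^{\eta};q)
= \sum_{n=0}^{\infty} \frac{B_{q}(\sigma + n, c - \sigma)}{B_{q}(\sigma, c - \sigma)}\frac{(q^{c};q)_{n}}{(q;q)_{n}}\frac{\lambda^{n}}{\Gamma_{q}(\eta n + \kappa)}\, u^{\eta n + \kappa - 1}.
\]
Since the convergence of $E_{\eta,\kappa}^{(\sigma;c)}$ on $|u|<(1-q)^{-\eta}$ is guaranteed by Theorem~\ref{1a}, termwise application of $D_{q}$ is legitimate. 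Using $D_{q}u^{a} = [a]_{q}u^{a-1}$, each term picks up a factor $[\eta n + \kappa - 1]_{q}$, and then the functional equation $\Gamma_{q}(u+1)=[u]_{q}\Gamma_{q}(u)$ from \eqref{eq4.18} collapses $[\eta n + \kappa - 1]_{q}/\Gamma_{q}(\eta n + \kappa)$ into $1/\Gamma_{q}(\eta n + \kappa - 1)$. Pulling out $u^{\kappa-2}$ then recovers exactly $u^{\kappa - 2} E_{\eta,\kappa-1}^{(\sigma;c)}(\lambda u^{\eta};q)$, which is the $m=1$ case.

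For the inductive step, I would assume the identity holds for some $m\ge 1$ and apply $D_{q}$ once more; the right-hand side has the same structural shape $u^{(\kappa-m)-1}E_{\eta,\kappa-m}^{(\sigma;c)}(\lambda u^{\eta};q)$, so the $m=1$ case applied with $\kappa$ replaced by $\kappa-m$ yields $u^{\kappa-m-2}E_{\eta,\kappa-m-1}^{(\sigma;c)}(\lambda u^{\eta};q)$, completing the induction.

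The main obstacle is purely bookkeeping: cleanly verifying the cancellation $[\eta n + \kappa - 1]_{q}/\Gamma_{q}(\eta n + \kappa) = 1/\Gamma_{q}(\eta n + \kappa - 1)$ and confirming that the $B_{q}$ and $q$-Pochhammer coefficients pass through $D_{q}$ untouched (they depend on $n$ only, not on $u$). There is no analytic subtlety once Theorem~\ref{1a} is invoked, so the remainder is algebraic manipulation of the series.
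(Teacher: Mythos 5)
Your proposal is correct and follows essentially the same route as the paper: the paper also applies $D_q$ termwise to the series, uses the functional equation \eqref{eq4.18} to reduce $[\eta n+\kappa-1]_q/\Gamma_q(\eta n+\kappa)$ to $1/\Gamma_q(\eta n+\kappa-1)$, obtains the case $m=1$, and then ``iterates $m-1$ times,'' which is precisely the induction you spell out. Your write-up is in fact slightly cleaner, since you keep the summation index distinct from the derivative order $m$ (the paper conflates the two in its displayed computation).
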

\begin{proof}
By considering the function
\begin{equation}\nonumber
f(u) = u^{\kappa-1}  E_{\eta, \kappa}^{(\sigma;c)}(\lambda u^{\eta};q)
\end{equation}
 and using the definition (\ref{eq4.22}), then, in view of (\ref{eq4.19}), we obtain
\begin{equation}\nonumber
D_{q}^{m}[u^{\kappa - 1} E_{\eta, \kappa}^{(\sigma;c)}(\lambda u^{\eta})] = \sum\limits_{m=0}^{\infty} \frac{B_{q}(\sigma + m + 1, c - \sigma)}{B_{q}(\sigma, c - \sigma)}~\frac{ (q^{c };q)_{m}}{(q;q)_{m}}
\end{equation}
\begin{equation}\nonumber
\hspace{2.7cm}\times \frac{{\lambda^{m}}(1-q^{\eta m+\kappa-1})}{1-q} \frac{u^{\eta m + \kappa -2}}{\Gamma_{q}(\eta m + \kappa)}
\end{equation}

Since, according to the functional equation (\ref{eq4.18}), the r.h.s of the above expression can be written as
\begin{equation}\nonumber
 \sum\limits_{m=0}^{\infty} \frac{B_{q}(\sigma + m + 1, c - \sigma)}{B_{q}(\sigma, c - \sigma)}~\frac{ (q^{c };q)_{m}}{(q;q)_{m}} \frac{\lambda^{m} u^{\eta m + \kappa -2}}{\Gamma_{q}(\eta m + \kappa - 1)}=u^{\kappa-2}~E_{\eta, \kappa-1}^{(\sigma;c)}(\lambda u^{\eta};q).
\end{equation}

Conclusively, we obtain
\begin{equation}\nonumber
D_{q}^{m}[u^{\kappa - 1} E_{\eta, \kappa}^{(\sigma;c)}(\lambda u^{\eta};q)] = u^{\kappa - 2}  E_{\eta,\kappa-1}^{(\sigma;c)}(\lambda u^{\eta};q).
\end{equation}

Iterating above result $m-1$ times, we obtain the required result (\ref{eq4.25}).
\end{proof}
\begin{thm}
Let $\xi, \zeta, \sigma, \kappa \in \mathbb{C}; \Re(\xi), \Re(\kappa), \Re(\sigma)> 0; \zeta \neq 0, -1, -2, \ldots$ then
\begin{equation}\nonumber
\int_{0}^{1} u^{\xi - 1}(1 - qu)_{(\zeta - 1)} E_{\eta,\kappa}^{(\sigma;c)}(xu^{\rho};q ) d_{q}u  \hspace{5cm}
\end{equation}
\begin{equation}\label{eq4.26}
= \sum\limits_{m=0}^{\infty} \frac{B_{q}(\sigma + m, c - \sigma)(q^{c};q)_{m}}{B_{q}(\sigma, c - \sigma)(q;q)_{m}} \frac{x^{m} \Gamma_{q}(\xi + \rho m)\Gamma_{q}(\xi)}
{\Gamma_{q}(\eta m + \kappa)\Gamma_{q}(\xi + \zeta + \rho m )}.
\end{equation}
In particular,
\begin{equation}\label{eq4.27}
\int_{0}^{1} u^{\xi - 1}(1 - qu)_{(\zeta - 1)} E_{\eta,\kappa}^{(\sigma;c)}(xu^{\rho};q ) d_{q}u = \Gamma_{q}(\zeta) E_{\eta,\kappa+\zeta}^{(\sigma;c)}(x;q )  \hspace{1.4cm}
\end{equation}
\end{thm}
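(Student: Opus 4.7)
The plan is to follow the standard recipe for this type of $q$-integral identity: substitute the defining series (\ref{eq4.22}) for the extended $q$-Mittag-Leffler function into the integrand, interchange summation and $q$-integration, recognise the resulting inner integral as a $q$-beta integral, and finally convert it to $q$-gamma functions using (\ref{eq4.17}).

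Concretely, after expanding $E_{\eta,\kappa}^{(\sigma;c)}(xu^{\rho};q)$ as a power series in $xu^{\rho}$ and pulling the $u$-independent factors outside, the left-hand side of (\ref{eq4.26}) becomes
\begin{equation*}
\sum_{m=0}^{\infty}\frac{B_q(\sigma+m,c-\sigma)}{B_q(\sigma,c-\sigma)}\,\frac{(q^{c};q)_m}{(q;q)_m}\,\frac{x^{m}}{\Gamma_q(\eta m + \kappa)}\int_{0}^{1} u^{\xi + \rho m - 1}(1-qu)^{(\zeta-1)}\,d_{q}u,
\end{equation*}
the interchange being justified by the absolute convergence established in Theorem \ref{1a} together with compactness of $[0,1]$. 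Using the translation $(1-qu)^{(\zeta-1)} = (qu;q)_{\zeta-1}$ supplied by (\ref{eq4.12})--(\ref{eq4.13}), the inner integral matches the $q$-beta integral of (\ref{eq4.17}) exactly, with the parameter substitutions $\eta\mapsto \xi+\rho m$ and $\kappa\mapsto\zeta$, giving
\begin{equation*}
\int_{0}^{1} u^{\xi+\rho m - 1}(qu;q)_{\zeta-1}\,d_{q}u \;=\; B_q(\xi+\rho m,\zeta) \;=\; \frac{\Gamma_q(\xi+\rho m)\,\Gamma_q(\zeta)}{\Gamma_q(\xi+\zeta+\rho m)}.
\end{equation*}
Substituting this back into the series reproduces (\ref{eq4.26}) term by term.

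For the particular case (\ref{eq4.27}), the natural specialisation is $\rho=\eta$ together with $\xi=\kappa$: then $\Gamma_q(\xi+\rho m) = \Gamma_q(\eta m + \kappa)$ cancels the $\Gamma_q(\eta m + \kappa)$ in the denominator, the factor $\Gamma_q(\zeta)$ pulls out as a constant, and what remains is precisely the series defining $E_{\eta,\kappa+\zeta}^{(\sigma;c)}(x;q)$.

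The only points demanding any care are notational rather than conceptual: one must translate between $(1-qu)^{(\zeta-1)}$ and $(qu;q)_{\zeta-1}$ before invoking (\ref{eq4.17}), and one should verify that $\Re(\xi+\rho m)>0$ for all $m\ge 0$ so that each $q$-beta integral is legitimate, which follows from the stated hypothesis $\Re(\xi)>0$ under the usual assumption $\rho>0$. There is no genuine obstacle: the entire argument reduces to a single application of the $q$-beta integral formula, followed by termwise reassembly of the series.
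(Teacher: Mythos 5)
Your proposal is correct and follows essentially the same route as the paper: expand via (\ref{eq4.22}), interchange summation and $q$-integration, evaluate the inner integral as the $q$-beta integral (\ref{eq4.17}), and then specialise. Your choice $\rho=\eta$, $\xi=\kappa$ for the particular case is in fact the correct specialisation (the paper's stated substitution ``$\kappa=\zeta$'' appears to be a typo for $\xi=\kappa$, and the factor $\Gamma_{q}(\xi)$ in the numerator of (\ref{eq4.26}) should read $\Gamma_{q}(\zeta)$, exactly as your $q$-beta evaluation produces).
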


\begin{proof}
By using the definition (\ref{eq4.22}), the l.h.s of equation (\ref{eq4.26}) can be written as

\begin{equation}\nonumber
\int_{0}^{1} u^{\xi - 1}(1 - qu)_{(\zeta - 1)}\sum\limits_{m=0}^{\infty} \frac{B_{q}(\sigma + m, c - \sigma)(q^{c};q)_{m}}{B_{q}(\sigma, c - \sigma)(q;q)_{m}} \frac{u^{\rho m} x^{m}}{\Gamma_{q}(\eta m + \kappa)} d_{q}u\hspace{1cm}
\end{equation}

Interchanging the order of summation and integration and in view of equation (\ref{eq4.17}), we obtain the required result (\ref{eq4.26}).

In equation (\ref{eq4.26}) replacing $\rho = \eta $, $\kappa = \zeta$, then in view of equation (\ref{eq4.22}), we can clearly obtain (\ref{eq4.27}).
\end{proof}
\begin{thm}(q-Laplace transform)
For $q$-analogue of the extended generalized  Lapalce transform is defined as follows:
\begin{equation}\label{eq4.28}
_{q}L_{s}[E_{\eta,\kappa}^{(\sigma;c)}(xu^{\rho};q )] = \frac{1}{s}
\sum\limits_{m=0}^{\infty} \frac{B_{q}(\sigma + m, c - \sigma)(q^{c};q)_{m}}{B_{q}(\sigma, c - \sigma)(q;q)_{m}} \frac{ \Gamma_{q}(1 + \rho m)}{\Gamma_{q}(\eta m + \kappa)}
\end{equation}
\begin{equation}\nonumber
\times \bigg(\frac{(1-q)^{\rho} x}{s^{\rho}}\bigg)^{m}
\end{equation}
provided that $\kappa, \sigma, s \in \mathbb{C}; \Re(\beta), \Re(\kappa), \Re(s) > 0. $
\end{thm}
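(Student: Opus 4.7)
The plan is to expand $E_{\eta,\kappa}^{(\sigma;c)}(xu^{\rho};q)$ via its defining series (\ref{eq4.22}), apply $_{q}L_{s}$ term-by-term, and thereby reduce the computation to a single image formula for the monomial $u^{\rho m}$. Substituting (\ref{eq4.22}) into $_{q}L_{s}[\cdot]$ exhibits the argument as a power series in $u$ whose coefficient of $u^{\rho m}$ is
$$c_{m} \;=\; \frac{B_{q}(\sigma+m,c-\sigma)}{B_{q}(\sigma,c-\sigma)}\,\frac{(q^{c};q)_{m}}{(q;q)_{m}}\,\frac{x^{m}}{\Gamma_{q}(\eta m+\kappa)}.$$
The absolute convergence supplied by Theorem~\ref{1a} then justifies interchanging summation with the $q$-integral built into $_{q}L_{s}$, leaving $\sum_{m\ge 0} c_{m}\,{}_{q}L_{s}[u^{\rho m}]$ to evaluate.

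The key technical input is the auxiliary identity $_{q}L_{s}[u^{\alpha}] = \Gamma_{q}(\alpha+1)(1-q)^{\alpha}/s^{\alpha+1}$, which follows directly from the $q$-integral definition (\ref{eq4.19I}) combined with the $\Gamma_{q}$-representation (\ref{eq4.16}) and a geometric-series summation. Applying it with $\alpha=\rho m$ produces the factor $\Gamma_{q}(1+\rho m)(1-q)^{\rho m}/s^{\rho m+1}$ inside each term; pulling out the common $1/s$ and grouping $x^{m}$, $(1-q)^{\rho m}$ and $s^{-\rho m}$ into $\big((1-q)^{\rho}x/s^{\rho}\big)^{m}$ reproduces (\ref{eq4.28}) on the nose.

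The principal obstacle is that the operator $_{q}L_{s}$ is not explicitly defined in the excerpt, so the power-function image formula must be established first from a fixed convention (Hahn's definition or an equivalent variant) whose normalization produces precisely the $(1-q)^{\alpha}s^{-(\alpha+1)}$ factor demanded on the right-hand side of (\ref{eq4.28}); the convergence domain in Theorem~\ref{1a} also has to be translated into a condition on $s$ that keeps $|(1-q)^{\rho}x/s^{\rho}|$ inside the radius of convergence so as to license the term-by-term transform. Once these two points are pinned down, the remainder is a routine interchange of sum and transform followed by cosmetic simplification.
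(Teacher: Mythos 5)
Your proposal is correct and follows essentially the same route as the paper: the paper substitutes the series (\ref{eq4.22}) into Hahn's transform written in its series form (\ref{eq4.31}), interchanges the two sums, and sums the inner $j$-series as a ${}_{1}\phi_{0}$ equal to $1/(q^{1+\rho m};q)_{\infty}$, which is precisely your monomial image formula ${}_{q}L_{s}[u^{\rho m}]=\Gamma_{q}(1+\rho m)(1-q)^{\rho m}s^{-\rho m-1}$ after invoking (\ref{eq4.16}). The only difference is organizational (you isolate that computation as a lemma, and note that the transform's definition --- which the paper supplies via (\ref{eq4.29}) --- must be fixed first), so the two arguments coincide.
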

\begin{proof}
The $q$-Laplace transform of a suitable function is given by means of following $q$-integral \cite{Hw}
\begin{equation}\label{eq4.29}
_{q}L_{s}\{f(u)\} = \frac{1}{(1-q)}\int_{0}^{s^{-1}} E_{q}^{qsu}f(u)d_{q}u.
\end{equation}

The $q$-extension of the exponential function \cite{Gasp} is given by
\begin{equation}\label{eq4.30}
E_{q}^{u} =  _{0}\phi_{0}  (-,-; q, -u) = \sum\limits_{m = 0}^{\infty} \frac{q^{{{m}\choose{2}}} u^{m}}{(q;q)_{m}} = (-u;q)_{\infty}
\end{equation}
and
\begin{equation}\label{eq4.30I}
e_{q}^{u} = {_{0}\phi_{0}}(0,-; q, -u) = \sum\limits_{m = 0}^{\infty} \frac{u^{m}}{(q;q)_{m}} = \frac{1}{(u;q)_{\infty}},\quad |u|<1.
\end{equation}

By using the above $q$-exponential series  and the $q$-integral equation (\ref{eq4.19I}), we can write equation (\ref{eq4.29}) as
\begin{equation}\label{eq4.31}
_{q}L_{s}\{f(u)\} = \frac{(q;q)_{\infty}}{s} \sum_{j=0}^{\infty}
\frac{q^{j} f(s^{-1}q^{j})}{(q;q)_{j}}.
\end{equation}

Using the definition (\ref{eq4.22}) and the definition of $q$-Laplace transform, we obtain
\begin{equation}\nonumber
_{q}L_{s}[E_{\eta,\kappa}^{(\sigma;c)}(xu^{\rho};q )]= \frac{(q;q)_{\infty}}{s}\sum_{j=0}^{\infty}\frac{q^{j}}{(q;q)_{j}} \hspace{4cm}
\end{equation}
\begin{equation}\nonumber
\times \sum_{m=0}^{\infty}  \frac{B_{q}(\sigma + m, c - \sigma)}{B_{q}(\sigma, c - \sigma)} \frac{(q^{\sigma};q)_{m}}{(q;q)_{m}} \frac{[u(s^{-1} q^{j})^{\sigma}]^{m}}{ \Gamma_{q}(\eta m + \kappa)}.
\end{equation}

On interchanging the order of summation and writing the $j$ series as $_{1}\phi_{0}$, which can be summed up as
$\frac{1}{(q^{1 + \rho m};q)_{\infty}}$ and after some simplifications, we obtain the required result (\ref{eq4.28}).
\end{proof}
\section{Kober type fractional $q$- calculus operators  }
Agarwal \cite{Agr} established Kober type fractional $q$-integral operator in the following manner
\begin{equation}\label{eq7.1}
(I_{q}^{\nu,\mu} f)(u) = \frac{u^{-\nu-\mu}}{\Gamma_{q}(u)}
\int_{0}^{u}(u-tq)_{\mu-1} t^{\nu}f(t)d_{q}t,
\end{equation}
where $\Re(\mu)>0$.
\vspace{.1cm}
Also, Garg \emph{et al.} \cite{Gmc}  introduced Kober fractional $q$-derivative operator given by
\begin{equation}\label{eq7.2}
(D_{q}^{\nu,\mu} f)(u) = \prod_{i = 0}^{m}
\bigg([\nu + j]_{q} + uq^{\nu + j} D_{q}\bigg)
(I_{q}^{\nu+\mu, m - \mu} f)(u),
\end{equation}
where $m = [\Re(\mu)] + 1,\,m \in \mathbb{N}$.

The image formula of the power function $u^{m}$ under the above operators \cite{Gmc} are given as:

\begin{equation}\label{eq7.3}
I_{q}^{\nu, \mu} \{u^{m}\} =
\frac{\Gamma_{q}(\nu + m + 1)}{\Gamma_{q}(\nu +\mu + m + 1)}u^{m}
\end{equation}

\begin{equation}\label{eq7.4}
D_{q}^{\nu, \mu} \{u^{m}\} =
\frac{\Gamma_{q}(\nu +\mu + m + 1)}{\Gamma_{q}(\nu + m + 1)}u^{m}
\end{equation}
\begin{thm}\label{1d}
The underlying assumption holds true:
\begin{equation}\label{eq7.5}
I_{q}^{\nu,\mu}\{E_{\eta,\kappa}^{(\sigma;c)}(u;q)\}  = \sum\limits_{m=0}^{\infty} \frac{B_{q}(\sigma + m, c - \sigma)}{B_{q}(\sigma, c - \sigma)}~\frac{(q^{c};q)_{m}}{(q;q)_{m}}~
\frac{\Gamma_{q}(\nu + m + 1)}{\Gamma_{q}(\nu + \mu + m + 1)}
\frac{u^{m}}{\Gamma_{q}(\eta m + \kappa)},
\end{equation}
 particularly,
\begin{equation}\label{eq7.6}
I_{q}^{\nu,\mu} E_{\eta,\kappa}^{(\nu+\mu;1)}(u;q) =
\frac{\Gamma_{q}(\nu + 1)}{\Gamma_{q}(\nu + \mu + 1)}E_{\eta,\kappa}^{(\nu+ 1;1)}(u;q),
\end{equation}
provided that if $\eta, c >0, \kappa, \sigma, u \in \mathbb{C}; \Re(\kappa),\,\Re(\sigma)>0$.
\end{thm}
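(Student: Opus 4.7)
The plan is to prove the series identity \eqref{eq7.5} by termwise application of the Kober operator on the series expansion of $E_{\eta,\kappa}^{(\sigma;c)}(u;q)$, and then to obtain \eqref{eq7.6} as a direct specialization combined with a $q$-Gamma identity.

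First I would write
\begin{equation*}
I_{q}^{\nu,\mu}\bigl\{E_{\eta,\kappa}^{(\sigma;c)}(u;q)\bigr\}
= I_{q}^{\nu,\mu}\Biggl\{\sum_{m=0}^{\infty}\frac{B_{q}(\sigma+m,c-\sigma)}{B_{q}(\sigma,c-\sigma)}\,\frac{(q^{c};q)_{m}}{(q;q)_{m}}\,\frac{u^{m}}{\Gamma_{q}(\eta m+\kappa)}\Biggr\}
\end{equation*}
by invoking the definition \eqref{eq4.22}. Theorem~\ref{1a} guarantees absolute convergence of this series on $|u|<(1-q)^{-\eta}$, so I can justify swapping the Kober operator with the summation (the Kober operator is a Jackson-type $q$-integral and is linear on the monomials inside its domain of convergence). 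Then I would use the image formula \eqref{eq7.3}, namely $I_{q}^{\nu,\mu}\{u^{m}\}=\frac{\Gamma_{q}(\nu+m+1)}{\Gamma_{q}(\nu+\mu+m+1)}u^{m}$, applied to each term of the series. This yields \eqref{eq7.5} immediately.

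For the particular case \eqref{eq7.6}, I would set $\sigma=\nu+\mu$ and $c=1$ in \eqref{eq7.5}. After this substitution $(q^{c};q)_{m}=(q;q)_{m}$, so the factor $\frac{(q^{c};q)_{m}}{(q;q)_{m}}$ drops out, and relation \eqref{eq4.20} collapses $\frac{B_{q}(\nu+\mu+m,1-\nu-\mu)}{B_{q}(\nu+\mu,1-\nu-\mu)}$ to a ratio of $q$-Pochhammer symbols. The key algebraic step is to combine this with the identity $\Gamma_{q}(\nu+m+1)=(1-q)^{-m}(q^{\nu+1};q)_{m}\Gamma_{q}(\nu+1)$ (and its analogue with $\nu+\mu$ in place of $\nu$), so that the accumulated factor of $m$-dependent gamma quotients can be rewritten as $\frac{\Gamma_{q}(\nu+1)}{\Gamma_{q}(\nu+\mu+1)}\cdot\frac{(q^{\nu+1};q)_{m}}{(q;q)_{m}}$. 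Recognising the resulting series as $E_{\eta,\kappa}^{(\nu+1;1)}(u;q)$ via \eqref{eq4a} gives the stated formula.

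The main obstacle is the bookkeeping in the second step: there are four $m$-dependent factors (the beta ratio, the $(q^{c};q)_{m}$, and the two $q$-gamma quotients) that must be simplified simultaneously, and the telescoping to $(q^{\nu+1};q)_{m}/(q;q)_{m}$ relies on the correct reading of \eqref{eq4.20}. Termwise differentiability/integrability is routine because the Kober operator preserves the power-series form and the series converges absolutely. Therefore the only genuine work lies in the final $q$-Pochhammer reduction that produces the right Mittag-Leffler parameters on the RHS of \eqref{eq7.6}.
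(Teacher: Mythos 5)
Your derivation of \eqref{eq7.5} is fine and is exactly the paper's (one-line) argument: expand via \eqref{eq4.22}, use linearity/termwise application justified by Theorem \ref{1a}, and apply the image formula \eqref{eq7.3} to each monomial. No issue there.

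The gap is in the passage to \eqref{eq7.6}. Your $q$-Gamma identity $\Gamma_{q}(\nu+m+1)=(1-q)^{-m}(q^{\nu+1};q)_{m}\,\Gamma_{q}(\nu+1)$ is correct, and it gives
\begin{equation*}
\frac{\Gamma_{q}(\nu+m+1)}{\Gamma_{q}(\nu+\mu+m+1)}
=\frac{\Gamma_{q}(\nu+1)}{\Gamma_{q}(\nu+\mu+1)}\cdot\frac{(q^{\nu+1};q)_{m}}{(q^{\nu+\mu+1};q)_{m}}.
\end{equation*}
But after setting $\sigma=\nu+\mu$, $c=1$ the $m$-th coefficient of the series (using \eqref{eq4a}) is $\frac{(q^{\nu+\mu};q)_{m}}{(q;q)_{m}}$, so the accumulated $m$-dependent factor is $\frac{(q^{\nu+\mu};q)_{m}}{(q;q)_{m}}\cdot\frac{(q^{\nu+1};q)_{m}}{(q^{\nu+\mu+1};q)_{m}}$, and the quotient $\frac{(q^{\nu+\mu};q)_{m}}{(q^{\nu+\mu+1};q)_{m}}=\frac{1-q^{\nu+\mu}}{1-q^{\nu+\mu+m}}$ does \emph{not} telescope away; the result is a ${}_2\phi_1$-type series, not $E_{\eta,\kappa}^{(\nu+1;1)}(u;q)$. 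The cancellation you describe requires the Pochhammer symbol $(q^{\nu+\mu+1};q)_{m}$ in the numerator, i.e.\ the specialization $\sigma=\nu+\mu+1$, which yields the (corrected) identity $I_{q}^{\nu,\mu}E_{\eta,\kappa}^{(\nu+\mu+1;1)}(u;q)=\frac{\Gamma_{q}(\nu+1)}{\Gamma_{q}(\nu+\mu+1)}E_{\eta,\kappa}^{(\nu+1;1)}(u;q)$ --- the $q$-analogue of the classical Kober--Prabhakar composition. The paper's own proof (``set $\sigma=\nu+\mu$'') suffers from exactly the same off-by-one, so the statement \eqref{eq7.6} itself appears to be misprinted; but as written, the ``key algebraic step'' you invoke would fail. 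One further caution: \eqref{eq4.20} as printed is the reciprocal of the true identity (one has $\frac{B_{q}(\sigma+m,c-\sigma)}{B_{q}(\sigma,c-\sigma)}=\frac{(q^{\sigma};q)_{m}}{(q^{c};q)_{m}}$, as forced by consistency with \eqref{eq4a}), so the ``correct reading'' you lean on must be the inverted one, and it is worth saying so explicitly in the bookkeeping.
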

\begin{proof}
The proof of (\ref{eq7.5}) can easily be obtained by making use of the definition (\ref{eq4.22}) and the result (\ref{eq7.3}).

Now, on setting $\sigma = \nu + \mu $ in the definition (\ref{eq4.22}), we obtain the result (\ref{eq7.6}).
\end{proof}

\begin{thm}\label{1e}
The underlying assumption holds true:
\begin{equation}\label{eq7.7}
D_{q}^{\nu,\mu} \{E_{\eta,\kappa}^{(\sigma;c)}(u;q )\} = \sum\limits_{m=0}^{\infty} \frac{B_{q}(\sigma + m, c - \sigma)}{B_{q}(\sigma, c - \sigma)}~\frac{(q^{c};q)_{m}}{(q;q)_{m}}~
\frac{\Gamma_{q}(\nu + \mu + m + 1)}{\Gamma_{q}(\nu + m + 1)}
\frac{u^{m}}{\Gamma_{q}(\eta m + \kappa)},
\end{equation}
 particularly,
\begin{equation}\label{eq7.8}
D_{q}^{\nu,\mu} E_{\eta,\kappa}^{(\nu+1;1)}(u;q) =
\frac{\Gamma_{q}(\nu + \mu + 1)}{\Gamma_{q}(\nu + 1)}E_{\eta,\kappa}^{\nu+ \mu}(u;q)
\end{equation}
provided that if $\eta, c >0, \kappa, \sigma, u \in \mathbb{C}; \Re(\kappa),\Re(\sigma)>0$.
\end{thm}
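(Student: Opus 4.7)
The plan is to mirror the argument already used for Theorem \ref{1d}, swapping the role of the Kober $q$-integral $I_{q}^{\nu,\mu}$ for the Kober $q$-derivative $D_{q}^{\nu,\mu}$ and invoking the image formula (\ref{eq7.4}) in place of (\ref{eq7.3}). Starting from the series representation (\ref{eq4.22}), I would apply $D_{q}^{\nu,\mu}$ term-by-term to the monomial $u^{m}$ and reassemble the resulting series.

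Concretely, I would write
\[
D_{q}^{\nu,\mu}\{E_{\eta,\kappa}^{(\sigma;c)}(u;q)\} = \sum_{m=0}^{\infty} \frac{B_{q}(\sigma+m,c-\sigma)}{B_{q}(\sigma,c-\sigma)}\,\frac{(q^{c};q)_{m}}{(q;q)_{m}}\,\frac{1}{\Gamma_{q}(\eta m+\kappa)}\,D_{q}^{\nu,\mu}\{u^{m}\}.
\]
The interchange is legitimate on the disk $|u|<(1-q)^{-\eta}$ of absolute (hence uniform-on-compacta) convergence supplied by Theorem \ref{1a}, because $D_{q}^{\nu,\mu}$, defined in (\ref{eq7.2}) as a composition of a finite product of first-order $q$-shift operators with a Kober $q$-integral, acts coefficient-wise on monomial expansions. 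Substituting the monomial image $D_{q}^{\nu,\mu}\{u^{m}\} = \dfrac{\Gamma_{q}(\nu+\mu+m+1)}{\Gamma_{q}(\nu+m+1)}\,u^{m}$ from (\ref{eq7.4}) then produces (\ref{eq7.7}) directly.

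For the special case (\ref{eq7.8}), I would specialize $\sigma=\nu+1$ and $c=1$ in (\ref{eq7.7}). Under this choice the factor $(q^{c};q)_{m}/(q;q)_{m}$ trivializes to $1$, while relation (\ref{eq4.20}) reduces the beta-ratio to $(q;q)_{m}/(q^{\nu+1};q)_{m}$. A $q$-gamma manipulation based on the functional equation (\ref{eq4.18}) then expresses $\Gamma_{q}(\nu+\mu+m+1)/\Gamma_{q}(\nu+m+1)$ as $[\Gamma_{q}(\nu+\mu+1)/\Gamma_{q}(\nu+1)]\cdot(q^{\nu+\mu+1};q)_{m}/(q^{\nu+1};q)_{m}$, which after pulling the constant prefactor out of the sum leaves the Prabhakar-type $q$-Mittag-Leffler series $E_{\eta,\kappa}^{\nu+\mu}(u;q)$ of (\ref{eq4.7}).

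The only non-routine step is justifying the termwise passage of the nonlocal operator $D_{q}^{\nu,\mu}$ through the infinite sum; once this is in hand via Theorem \ref{1a}, the remainder of the proof is algebraic bookkeeping using (\ref{eq7.4}), (\ref{eq4.18}), and (\ref{eq4.20}). I expect no genuine obstacle beyond stating the interchange carefully, as each Pochhammer and $q$-gamma manipulation is standard.
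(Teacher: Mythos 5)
Your strategy for \eqref{eq7.7} is exactly the paper's: the published proof is a one-line appeal to the series definition \eqref{eq4.22} together with the monomial image formula \eqref{eq7.4}, applied term by term, and your version merely adds the (welcome) justification of the interchange via the convergence radius from Theorem \ref{1a}. That part is fine and matches the paper.

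The gap is in your treatment of the special case \eqref{eq7.8}, where you assert that the bookkeeping ``leaves'' $E_{\eta,\kappa}^{\nu+\mu}(u;q)$ without carrying it out; it does not. With $\sigma=\nu+1$, $c=1$ the $m$-th coefficient you obtain is
\[
\frac{(q;q)_{m}}{(q^{\nu+1};q)_{m}}\cdot\frac{\Gamma_{q}(\nu+\mu+1)}{\Gamma_{q}(\nu+1)}\cdot\frac{(q^{\nu+\mu+1};q)_{m}}{(q^{\nu+1};q)_{m}}\cdot\frac{1}{\Gamma_{q}(\eta m+\kappa)},
\]
using \eqref{eq4.20} exactly as the paper states it, and this is not the coefficient $(q^{\nu+\mu};q)_{m}/\big((q;q)_{m}\,\Gamma_{q}(\eta m+\kappa)\big)$ of $E_{\eta,\kappa}^{\nu+\mu}(u;q)$ from \eqref{eq4.7}: the two $(q^{\nu+1};q)_{m}$ factors in the denominator do not cancel against anything. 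Even if one instead uses the reading under which \eqref{eq4a} holds, i.e.\ $E_{\eta,\kappa}^{(\sigma;1)}(u;q)=E_{\eta,\kappa}^{\sigma}(u;q)$, the computation
\[
\sum_{m\ge 0}\frac{(q^{\nu+1};q)_{m}}{(q;q)_{m}}\,\frac{\Gamma_{q}(\nu+\mu+m+1)}{\Gamma_{q}(\nu+m+1)}\,\frac{u^{m}}{\Gamma_{q}(\eta m+\kappa)}
=\frac{\Gamma_{q}(\nu+\mu+1)}{\Gamma_{q}(\nu+1)}\sum_{m\ge 0}\frac{(q^{\nu+\mu+1};q)_{m}}{(q;q)_{m}}\,\frac{u^{m}}{\Gamma_{q}(\eta m+\kappa)}
\]
lands on $E_{\eta,\kappa}^{\nu+\mu+1}(u;q)$, not $E_{\eta,\kappa}^{\nu+\mu}(u;q)$. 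So the ``standard Pochhammer manipulation'' you defer to cannot produce \eqref{eq7.8} as printed; the discrepancy originates in the paper's own statement (the same off-by-$1$ issue afflicts \eqref{eq7.6}), but your proof should either correct the parameter on the right-hand side or exhibit the cancellation explicitly rather than asserting it.
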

\begin{proof}
The proof of (\ref{eq7.7}) can easily be obtained by making use of the definition (\ref{eq4.22}) and the result (\ref{eq7.4}).~Similarly, on setting $\sigma = \nu + 1 $ in the definition (\ref{eq4.22}), we obtain the result (\ref{eq7.8}).
\end{proof}

\end{document}